\documentclass[11pt]{article}

\usepackage{amsbsy}
\usepackage{amssymb}
\usepackage{amsmath}
\usepackage{amsthm}

\usepackage{times}
\usepackage{bm}
\usepackage{dsfont} 

\usepackage{xcolor}

\usepackage{graphicx}

\usepackage{url}

\usepackage{natbib}
\setlength{\bibsep}{1.0pt}

\usepackage[plain,noend]{algorithm2e}

\usepackage[top=1in, bottom=1in, left=1in, right=1in]{geometry}

\newcommand{\E}{\mathrm{E}}
\newcommand{\var}{\text{var}}
\newcommand{\cov}{\text{cov}}

\newcommand{\X}{\mathcal{X}}

\newcommand{\A}{\mathbf{A}}

\newcommand{\bd}{\mathbf{d}}
\newcommand{\Y}{\mathbf{Y}}
\renewcommand{\X}{\mathbf{X}}

\newcommand{\G}{\mathcal{G}}
\newcommand{\V}{\mathcal{V}}
\newcommand{\gE}{\mathcal{E}}

\newcommand{\Ao}{\mathbf{A}^{\!\mathcal{O}}}

\newtheorem{lemma}{Lemma}
\newtheorem{definition}{Definition}
\newtheorem*{defn*}{Definition}
\newtheorem{proposition}{Proposition}
\newtheorem{corollary}{Corollary}

\newtheorem{assumption}{Assumption}


\title{Confidence intervals for means under constrained dependence}

\author{Peter M. Aronow$^{1,2}$, Forrest W. Crawford$^{2}$, and Jos\'e R. Zubizarreta$^{3}$\\[1em]
1. Department of Political Science, Yale University \\
2. Department of Biostatistics, Yale School of Public Health \\ 
3. Division of Decision, Risk and Operations, and Department of Statistics, Columbia University }


\begin{document}

\maketitle 

\begin{abstract}
\noindent We develop a general framework for conducting inference on the mean of dependent random variables given constraints on their dependency graph.  We establish the consistency of an oracle variance estimator of the mean when the dependency graph is known, along with an associated central limit theorem. We derive an integer linear program for finding an upper bound for the estimated variance when the graph is unknown, but topological and degree-based constraints are available.  We develop alternative bounds, including a closed-form bound, under an additional homoskedasticity assumption.  We establish a basis for Wald-type confidence intervals for the mean that are guaranteed to have asymptotically conservative coverage.  We apply the approach to inference from a social network link-tracing study and provide statistical software implementing the approach.\\[1em]
\textbf{Keywords} dependency graph, HIV prevalence, oracle estimator, variance estimate 
\end{abstract}


\section{Introduction}

Researchers often encounter dependent data, where the exact nature of that dependence is unknown, and they wish to make inferences about outcome means.  Current methods typically assume either independence of unit outcomes, or that the dependency structure is known or directly estimable \citep{liang1986longitudinal,conley1999gmm,white2014asymptotic,ogburn2014vaccines,cameron2015practitioner,tabord2015inference}.  In many cases, however, researchers may only have limited information about the nature of dependence between units, or perhaps only the number of other units on which a given unit's outcome depends.  For example, in studies of units embedded in a network, the degrees to which subjects are connected may be known, but the identities of the other subjects to whom they are connected may often remain unobserved \citep[e.g.,][]{crawford2016graphical}. The underlying relationships may be represented by a dependency graph \citep{baldi1989normal}, where vertices represent individual units and edges represent the possibility of probabilistic dependence.  A dependency graph is not a generative graphical model for outcomes, such as a Markov random field. Rather, a dependency graph is a description of possible non-independence relationships between units.

In this paper, we develop a framework for constructing confidence intervals for the mean of dependent random variables, where their dependency graph is unknown or partially known but subject to topological constraints.  Considering the class of Wald-type normal-approximation-based estimators given the sample mean, we seek an upper bound for the estimated variance of the sample mean using upper bounds for the degrees of each unit in the dependency graph and a local dependence assumption.  We show that this optimization problem can be expressed as a integer linear program for the elements of the dependency graph adjacency matrix. 
We implement this approach in the new statistical software package \texttt{depinf} for \texttt{R}.
The approach may be used even when no edges in the dependency graph are known. We also derive more computationally simple bounds, including a closed-form bound, when the random variables are assumed to be homoskedastic.  We illustrate the utility of the method using data from a social link-tracing study of individuals at high risk for HIV infection in St. Petersburg, Russia.


\section{Setting}

Consider a simple undirected graph $\G=(\V,\gE)$ with no parallel edges or self-loops. Let $|\V|=N$.  Associated with each vertex $i\in \V$ is a random variable $X_i$, and $\G$ characterizes probabilistic dependencies in the outcomes \citep[e.g.,][]{baldi1989normal}.
\begin{definition}[Dependency graph]
  $\G$ is a dependency graph if for all disjoint sets $\V_1,\V_2\subset \V$ with no edge in $\gE$ connecting a vertex in $\V_1$ to a vertex in $\V_2$, the set $\{X_i:\ i\in\V_1\}$ is independent from the set $\{X_j:\ j\in\V_2\}$.
  \label{defn:depgraph}
\end{definition}
We emphasize that a dependency graph represents a set of possible non-independence relationships among units, not a graphical model that induces dependencies. 

Suppose $\G$ is a dependency graph and we observe a subset $\V_S\subseteq\V$, where $|\V_S|=n$.  Label these observed vertices $1,\ldots,n$, and label the unobserved vertices in $\V\setminus\V_S$ arbitrarily by $n+1,\ldots,N$.  For each $i\in \V_S$, we observe the outcomes $X_1,\ldots,X_n$ and the degrees $d_i=|\{j:\ \{i,j\}\in \gE\}|$ for each $i\in\V_S$.  
\begin{definition}[Induced subgraph]
  For a set of vertices $\V_S\subseteq\V$, the induced subgraph in $\G$ is $\G_S=(\V_S,\gE_S)$, where $\gE_S = \{ \{i,j\}: i\in\V_S,\ j\in\V_S, \text{ and } \{i,j\}\in\gE\}$.
  \label{defn:induced}
\end{definition}
Let $\G_S=(\V_S,\gE_S)$ be the induced subgraph of the observed vertices $\V_S$. It follows that $\G_S$ is also a dependency graph. Let $\G_R=(\V_S,\gE_R)$ be a subgraph of $\G_S$, consisting of all the observed vertices in $\V_S$, and a subset of the edges in $\gE_S$.  
\begin{assumption}[Observed data]
  We observe the outcomes $X_1,\ldots,X_n$, the degrees $d_1,\ldots,d_n$, and $\G_R$.
  \label{assump:obs}
\end{assumption}
Let $\X=(X_1,\ldots,X_n)$, $\bd=(d_1,\ldots,d_n)$, and denote the observed data as $\Y=(\X,\bd,\G_R)$.  

We wish to conduct inference on the mean $\mu = \frac{1}{n} \sum_{i \in V_S} \E[X_i]$ given $\Y$. The mean $\mu$ is a functional of the joint distribution of outcomes for the units in the sample, and is accordingly a data-adaptive target parameter \citep{vanderlaan2013statistical,balzer2015targeted} and not necessarily a feature of any broader population of units. Let $\overline{X}=n^{-1}\sum_{i\in\V_S} X_i$. We proceed by constructing conservative estimators of 
\[ \var(\overline{X}) = \frac{1}{n^2} \sum_{i\in\V_S}^n \sum_{j\in\V_S}^n \cov(X_i,X_j). \]
We may use the square roots of these estimates as standard error estimators in order to construct Wald-type confidence intervals about the sample mean that are guaranteed to have asymptotic coverage for $\mu$ at greater than or equal to nominal levels.   


\section{Variance estimation}

The observed subgraph $G_R$ may not reveal all the edges in $G_S$ that connect observed vertices.  We consider a class of variance estimators that depend on knowledge of $G_S$, whose structure is represented by an $n\times n$ binary symmetric adjacency matrix in which rows and columns are ordered by the indices $1,\ldots,n$ of the vertices in $V_S$.  We now define some key concepts.
\begin{definition}[Compatibility]
  The $n\times n$ binary symmetric adjacency matrix $\A$ is compatible with the observed data $\Y$ if for each $\{i,j\}\in \gE_R$, $\A_{ij}=\A_{ji}=1$, and for each $i\in\V_S$, $\sum_{j\in \V_S} \A_{ij} \le d_i$.
  \label{defn:compatibility}
\end{definition}
The last condition in Definition \ref{defn:compatibility} requires that the degree of $i$ in the subgraph $\G_S$ not be greater than its degree in the full graph $\G$.  Let $\Ao=\{\Ao_{ij}\}$ be the true $n\times n$ adjacency matrix of $G_S$, where $\Ao_{ij}=1$ if $\{i,j\}\in \gE_S$ for $i,j\in \V_S$ and 0 otherwise.  Let $\mathcal{A}(\Y) = \{\A: \A\text{ is compatible with } \Y\}$ in the sense of Definition \ref{defn:compatibility}; it is clear that $\Ao\in \mathcal{A}(\Y)$. 
\begin{definition}[Oracle estimator]
  For a family of variance estimators $\widehat{V}(\A;\Y)$ defined for $\A\in\mathcal{A}(\Y)$, the oracle estimator is $\widehat{V}(\Ao;\Y)$.
\end{definition}
For a variance estimator $\widehat{V}(\A;\Y)$, define the set $\mathcal{A}^m = \{ \A\in\mathcal{A}(\Y):\  \widehat{V}(\A;\Y) \text{ is maximized}\}$.
\begin{definition}[Maximal compatible estimator]
  Let $\A^m\in \mathcal{A}^m$. The maximal compatible estimator is $\widehat{V}(\A^m;\Y)$.
\end{definition}
The maximal compatible estimator provides a sharp upper bound for the oracle estimator because $\widehat{V}(\Ao;\Y) \le \widehat{V}(\A^m;\Y)$. Finally, define the plug-in sample variance, $\hat\sigma^2 = n^{-1} \sum_{i\in \V_S} (X_i - \overline{X})^2$. 

We now describe an asymptotic scaling, along with boundedness conditions for outcome values and unit degrees. In particular, bounding degrees suffices to ensure sufficient sparsity in the dependency graph to allow for root-$n$ consistency, a central limit theorem, and convergence of the variance estimator.
\begin{assumption}[Asymptotic scaling] 
Consider the sequence $(\G,\Y)_n$ of nested graphs $\G$ and observed data $\Y=(\G_R,\X,\bd)$, where $\G_R=(\V_S,\E_R)$, $|\V_R|=n$, and $|\V| = N_n \geq n$.  Assume there exist finite, positive constants $c_1$, $c_2$ such that for every element $(\G,\Y)_n$, 
$\Pr( |X_i - \mu | > c_1) = 0, \forall i \in \V_S$ (bounded outcome values) and
$\sum_{j\in \V_S} \Ao_{ij} \leq c_2, \forall i \in \V_S$ (bounded degrees in the dependency graph). Further assume there exists a finite, positive constant $c_3$ such that $\lim_{n \rightarrow \infty} n \var(\overline X) = c_3$ (nondegenerate limiting variance).
\label{assump:scaling}
\end{assumption}
We will proceed by deriving oracle estimators under two sets of nested assumptions. We establish their asymptotic properties, then derive feasible estimators that 
dominate the oracle estimators. 


\subsection{General Case}

\label{sec:general}

We first consider the case where we impose no distributional assumptions on the distribution of any $X_i$ (beyond the boundedness conditions of Assumption \ref{assump:scaling}).  Define the estimator
\begin{equation}	\label{eq:v1}
  \widehat{V}_1(\A;\Y) = \frac{1}{n^2} \left[ n\hat\sigma^2 + \sum_{i\in\V_S} \sum_{j\in\V_S} \A_{ij} (X_i - \overline{X}) (X_j - \overline{X}) \right].
\end{equation}
The corresponding oracle estimator $\widehat{V}_1(\Ao;\Y)$ is consistent.
\begin{proposition}
Under Assumption \ref{assump:scaling}, for any $\epsilon > 0$,
\[ \lim_{n\to\infty} \Pr(| n \widehat{V}_1(\Ao;\Y) - n \var(\overline X) | > \epsilon) = 0.  \]
\label{prop:V1consistent}
\end{proposition}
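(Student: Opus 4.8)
The plan is to express both $n\,\widehat{V}_1(\Ao;\Y)$ and $n\var(\overline{X})$ as $1/n$ times a sum over the same index set of size $O(n)$, peel off negligible corrections to obtain a mean-zero quadratic form, and bound its variance by $O(1/n)$ via Chebyshev. Write $W_i = X_i - \mu$ for $i\in\V_S$; then $\E[W_i]=0$ (using $\E[X_i]=\mu$), $|W_i|\le c_1$ almost surely by Assumption~\ref{assump:scaling}, $\overline{W}:=n^{-1}\sum_{i\in\V_S}W_i = \overline{X}-\mu$, and $X_i-\overline{X}=W_i-\overline{W}$. Using $\Ao_{ii}=0$ and $n\hat\sigma^2=\sum_{i\in\V_S}(X_i-\overline{X})^2$ in~\eqref{eq:v1},
\[
 n\,\widehat{V}_1(\Ao;\Y)=\frac1n\sum_{(i,j)\in S}(W_i-\overline{W})(W_j-\overline{W}),\qquad S:=\{(i,j):i=j\text{ or }\Ao_{ij}=1\}.
\]

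On the population side, the essential point is that $\G_S$ is an \emph{induced} subgraph of the dependency graph $\G$ (hence itself a dependency graph): for $i\ne j$ with $\Ao_{ij}=0$, the vertices $i,j$ are non-adjacent in $\G$, so $X_i\perp X_j$ by Definition~\ref{defn:depgraph} and $\cov(X_i,X_j)=0$. Therefore $n\var(\overline{X})=n^{-1}\sum_{i,j\in\V_S}\cov(X_i,X_j)=n^{-1}\sum_{(i,j)\in S}\cov(X_i,X_j)$, a sum over the same set $S$, and the bounded-degree clause of Assumption~\ref{assump:scaling} gives $|S|\le(c_2+1)n$. Expanding $(W_i-\overline{W})(W_j-\overline{W})=W_iW_j-\overline{W}(W_i+W_j)+\overline{W}^2$ and summing over $S$: since $\E[\overline{W}]=0$ and $\var(\overline{W})=\var(\overline{X})=O(1/n)$ (nondegenerate-limit clause of Assumption~\ref{assump:scaling}), $\overline{W}=O_p(n^{-1/2})$; then $n^{-1}|S|\,\overline{W}^2\le(c_2+1)\overline{W}^2=O_p(1/n)$, and $n^{-1}\overline{W}\sum_{(i,j)\in S}(W_i+W_j)=2n^{-1}\overline{W}\sum_{i\in\V_S}W_i\bigl(1+\sum_{j\in\V_S}\Ao_{ij}\bigr)=O_p(n^{-1/2})$ because this last inner sum is bounded in modulus by $(c_2+1)c_1n$. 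Since $\E[W_iW_j]=\cov(X_i,X_j)$, it suffices to show $R_n:=n^{-1}\sum_{(i,j)\in S}\bigl(W_iW_j-\E[W_iW_j]\bigr)\arrowp 0$, as then $n\,\widehat{V}_1(\Ao;\Y)-n\var(\overline{X})=R_n+o_p(1)$.

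Because $R_n$ has mean zero, I would estimate $\E[R_n^2]=\var(R_n)=n^{-2}\sum_{(i,j)\in S}\sum_{(k,\ell)\in S}\cov(W_iW_j,W_kW_\ell)$. The crucial fact is that $W_iW_j$ depends only on $(X_i,X_j)$ and $W_kW_\ell$ only on $(X_k,X_\ell)$; if $\{i,j\}\cap\{k,\ell\}=\emptyset$ and no edge of $\gE$ joins $\{i,j\}$ to $\{k,\ell\}$, Definition~\ref{defn:depgraph} forces independence, hence $\cov(W_iW_j,W_kW_\ell)=0$. So a nonzero summand requires some vertex of $\{k,\ell\}$ to lie in the closed $\G$-neighborhood within $\V_S$ of $i$ or of $j$; since adjacency inside $\V_S$ coincides with adjacency in $\G$ (induced subgraph), each such neighborhood has at most $c_2+1$ vertices and each vertex of $\V_S$ lies in at most $2(c_2+1)$ pairs of $S$, so for each of the $|S|=O(n)$ choices of $(i,j)$ at most $O(c_2^{2})$ choices of $(k,\ell)$ survive; each surviving covariance is at most $2c_1^{4}$ in modulus. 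Hence $\E[R_n^2]\le n^{-2}\cdot O(n)\cdot O(1)=O(1/n)\to 0$, and Chebyshev's inequality gives $R_n\arrowp 0$, which completes the proof.

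I expect the routine estimates — the boundedness bounds, the $\overline{W}$-corrections, Chebyshev — to be unproblematic, and the one genuine obstacle to be the combinatorial accounting in the last step: carefully enumerating the ways a fourth-order covariance $\cov(W_iW_j,W_kW_\ell)$ can fail to vanish (overlapping pairs versus cross-pairs that are edges of $\gE$) and confirming that the number of surviving index quadruples is $O(n)$ with a constant depending only on $c_2$.
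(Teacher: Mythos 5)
Correct, and essentially the same approach as the paper: both proofs are Chebyshev/second-moment arguments in which the bounded-degree condition of Assumption \ref{assump:scaling} restricts the non-vanishing fourth-order covariances to the $O(n)$ quadruples $(i,j,k,l)$ with $\{i,j\}$ and $\{k,l\}$ overlapping or joined by an edge of $\G_S$, so the variance of the relevant quadratic form is $O(n^{-1})$. Your bookkeeping (centering at $\mu$, merging $\hat\sigma^2$ into a single sum over $S$, and disposing of the $\overline{W}$ corrections deterministically) is a cleaner version of the paper's expansion of $\xi_{ijkl}$ around $\overline{X}$; just note that your appeal to $\E[X_i]=\mu$ assumes mean homogeneity, which the paper's definition $\mu = n^{-1}\sum_{i\in\V_S}\E[X_i]$ does not by itself guarantee---though the paper's own ``asymptotic unbiasedness'' step rests on the same implicit assumption, so this does not distinguish your argument from the paper's.
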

\begin{proof} 
  We follow the general proof strategy of \citet{aronow2013estimating}.    We will establish mean square convergence of $n \widehat{V}_1(\Ao;\Y)$ to $n \var(\overline X)$, allowing us to invoke Chebyshev's inequality to prove the proposition.  
  Decompose $\hat\sigma^2 = n^{-1}\sum_{i=1}^n X_i^2 - n^{-2}\left(\sum_{i=1}^n X_i\right)^2$. 
  Linearity of expectations implies $\E[\overline{X}] = \mu$ and $\E[ \overline{X^2}] = n^{-1} \sum_{i=1}^n \E[X_i^2]$. 
  Since Assumption \ref{assump:scaling} guarantees bounded outcomes, and the number of nonzero elements in the covariance matrix of outcome values is $O(n)$, $\var(\overline{X}) = O(n^{-1})$ and $\var(\overline{X^2}) = O(n^{-1})$, yielding convergence of $\hat{\sigma}^2$. 

Next we address convergence of the second term $n^{-1}\sum_{i\in\V_S} \sum_{j\in\V_S} \Ao_{ij} (X_i - \overline{X}) (X_j - \overline{X})$. Asymptotic unbiasedness follows directly from linearity of expectations and $\var(\overline{X}) = O(n^{-1})$. To establish mean square convergence, we consider the variance 
\begin{equation}
  \begin{split}
    &\var\left( \frac{1}{n} \sum_{i\in V_S} \sum_{j\in V_S} \Ao_{ij} (X_i - \overline{X})(X_j - \overline{X})\right) \\
    &\quad = \frac{1}{n^2} \sum_{i,j,k,l\in V_S} \cov\left(\Ao_{ij}(X_i-\overline{X})(X_j-\overline{X}), \Ao_{kl}(X_k-\overline{X})(X_l-\overline{X})\right)  \\
    &\quad = \frac{1}{n^2} \sum_{i,j,k,l\in V_S} \Ao_{ij}\Ao_{kl} \cov\left((X_i-\overline{X})(X_j-\overline{X}), (X_k-\overline{X})(X_l-\overline{X})\right) 
  \end{split}
  \label{eq:var2term}
\end{equation}
where the last line follows from bilinearity of covariance. Letting 
\[ \xi_{ijkl} = \cov\big((X_i-\overline{X})(X_j-\overline{X}), (X_k-\overline{X})(X_l-\overline{X})\big), \]
we now examine the conditions under which $\xi_{ijkl}\neq 0$. Expanding the covariance, 
\begin{equation}
  \begin{split}
    \xi_{ijkl} &= \cov\big((X_i-\overline{X})(X_j-\overline{X}), (X_k-\overline{X})(X_l-\overline{X})\big)  \\
    &= \E\big[(X_i-\overline{X})(X_j-\overline{X})(X_k-\overline{X})(X_l-\overline{X})\big] \\
    &\quad -  \E\big[(X_i-\overline{X})(X_j-\overline{X})\big]\E\big[(X_k-\overline{X})(X_l-\overline{X})\big]  \\
    &= \E[X_iX_jX_kX_l] 
 - \E[X_iX_jX_k\overline{X}]  
 - \E[X_iX_jX_l\overline{X}] 
 - \E[X_iX_kX_l\overline{X}] \\
 &\quad - \E[X_jX_kX_l\overline{X}]  
 + \E[X_iX_j\overline{X}^2] 
 + \E[X_iX_k\overline{X}^2] 
 + \E[X_iX_l\overline{X}^2] \\
 &\quad + \E[X_jX_k\overline{X}^2]  
 + \E[X_jX_l\overline{X}^2] 
 + \E[X_kX_l\overline{X}^2] \\
 &\quad - \E[X_i\overline{X}^3] 
  - \E[X_j\overline{X}^3] 
 - \E[X_k\overline{X}^3]
 - \E[X_l\overline{X}^3] 
  + \E[\overline{X}^4] \\
 &\quad - \big[\E[X_iX_j]\E[X_kX_l]
 - \E[X_iX_j]\E[X_k\overline{X}]
 - \E[X_iX_j]\E[X_l\overline{X}] \\
 &\quad + \E[X_iX_j]\E[\overline{X}^2] 
        - \E[X_i\overline{X}]\E[X_kX_l]  
        + \E[X_i\overline{X}]\E[X_l\overline{X}] \\
 &\quad + \E[X_i\overline{X}]\E[X_k\overline{X}] 
        - \E[X_i\overline{X}]\E[\overline{X}^2]
        - \E[X_j\overline{X}]\E[X_kX_l]  \\
 &\quad + \E[X_j\overline{X}]\E[X_l\overline{X}] 
        + \E[X_j\overline{X}]\E[X_k\overline{X}] 
        - \E[X_j\overline{X}]\E[\overline{X}^2] \\
 &\quad + \E[\overline{X}^2]\E[X_kX_l] 
        - \E[\overline{X}^2]\E[X_k\overline{X}]
        - \E[\overline{X}^2]\E[X_l\overline{X}]
        + \E[\overline{X}^2]\E[\overline{X}^2] \big]
 \end{split}
 \label{eq:cijkl}
\end{equation}
Then by root-$n$ consistency of means and Slutsky's Theorem, as $n\to\infty$ expectations involving $\overline{X}$ factorize, yielding, e.g. $\E(X_i\overline{X}) = \E(X_i)\mu + O(n^{-1})$.  We therefore combine terms and rewrite \eqref{eq:cijkl} as
\begin{equation}
  \begin{split}
    \xi_{ijkl} &= \cov(X_iX_j,X_kX_l) \\
    & - \mu\big(\cov(X_iX_j,X_k) + \cov(X_iX_j,X_l) + \cov(X_i,X_kX_l) + \cov(X_jX_kX_l)\big) \\
    & + \mu^2\big(\cov(X_i,X_k) + \cov(X_i,X_l) + \cov(X_j,X_k) + \cov(X_j,X_l) \big) + O(n^{-1}) \\ 
    & = \xi'_{ijkl} + O(n^{-1}),
 \end{split}
 \label{eq:cijkl2}
\end{equation}
where the limiting covariance is denoted $\xi'_{ijkl}$.  This can only be nonzero if at least one of the covariance terms in \eqref{eq:cijkl2} is nonzero.  Since $\G_S$ is a dependency graph, this condition is only met when there exists at least one edge between a vertex in the set $\{i,j\}$ and a vertex in the set $\{k,l\}$.  Therefore $\Ao_{ij}\Ao_{kl} \xi'_{ijkl}$ can only be nonzero if 
\[ \{\Ao_{ij}=\Ao_{kl}=1\} \text{ and } \left( \{\Ao_{ik}=1\} \text{ or } 
  \{\Ao_{il}=1\} \text{ or }
  \{\Ao_{jk}=1\} \text{ or }
\{\Ao_{jl}=1\} \right). \]
By Assumption \ref{assump:scaling}, the degree of each vertex in $V_S$ is bounded by $c_2$, so the condition is satisfied by at most $4nc_2^3$ terms in the summation in \eqref{eq:var2term}. In addition, we may compute the remainder term $\sum_{i,j,k,l\in V_S} \Ao_{ij}\Ao_{kl}(\xi_{ijkl} - \xi'_{ijkl}) = \sum_{i,j,k,l\in V_S} \Ao_{ij}\Ao_{kl} O(n^{-1}) = O(n)$, thus both terms are $O(n)$ before dividing by $n^2$. Therefore 
$\var\left( n^{-1} \sum_{i\in V_S} \sum_{j\in V_S} \Ao_{ij} (X_i - \overline{X})(X_j - \overline{X})\right) = O(n^{-1})$
and the result follows.
\end{proof}

Proposition 1 is readily applicable to problems where the dependency graph is known, as it provides a basis for consistent variance estimation, generalizing results for special cases \citep{conley1999gmm,aronow2015cluster}. We now address the case where the true subgraph $\G_S$ is not known, but constraints on the graph are available.

Let $\mathcal{A}_1^m = \{ \A\in\mathcal{A}(\Y):\  \widehat{V}_1(\A;\Y) \text{ is maximized}\}$ be the set of compatible adjacency matrices that maximize $\widehat{V}_1(\A;\Y)$.  We can find an element $\A^m$ of $\mathcal{A}_1^m$ by solving the 0-1 integer linear program
\begin{equation} \label{eq:ilp1}
\begin{aligned}
& \underset{\A}{\text{maximize}}
&& (\X-\overline \X)' \A (\X-\overline \X) \\
& \text{subject to}
&& \A \mathbf{1} \preceq \mathbf{d}, \\
&&& \A \succeq \A_R,
\end{aligned}
\end{equation}
where $\A_R$ is the adjacency matrix of $\G_R$ and $\preceq$ denotes the element-wise ``less-than'' relation.
Since $\A$ is an adjacency matrix, we can reduce the program and maximize over the decision variables that correspond to the upper or lower diagonal elements of $\A$ only (for details, see the supplementary materials).
The resulting program has $n(n - 1)/2$ decision variables and in general it is a multidimensional knapsack problem \citep{kellerer2004introduction}.  
In the abstract, this problem is NP-hard problem, but it admits a polynomial time approximation scheme (PTAS).
Nonetheless, typical PTAS depend heavily on the size of the problem and their running time is very high (see, e.g., section 9.4.2 of \citealt{kellerer2004introduction}). 
In spite of this, in standard practice, for example with 1000 observations or less as in our application in Section \ref{sec:app}, problem (\ref{eq:ilp1}) can be solved in a few seconds with modern optimization solvers such as Gurobi.  
To obtain a solution within a provably small optimality gap, these solvers use a variety of techniques, including: linear programming and branch-and-bound procedures to reduce the set of feasible solutions; presolve routines applied prior to the branch-and-bound procedures to reduce the size of the problem; cutting planes methods to remove fractional solutions and tighten the formulation; and a collection of heuristics to find good incumbent solutions in the branch-and-bound \citep{bixby2007progress, linderoth2012milp, nemhauser2013impact}.  
All these techniques are used in parallel by exploiting the availability of multiple cores in computers today.  
We provide an implementation in the new statistical package \texttt{depinf} for \texttt{R}.

While the true adjacency matrix $\Ao$ is not known, an element $\A^m \in \mathcal{A}_1^m$ produces a variance estimate $\widehat{V}_1(\A^m,\Y)$ that is at least as large as the oracle estimator $\widehat{V}_1(\Ao;\Y)$.  As $n$ grows large, the variance estimate $\widehat{V}_1(\A^m,\Y)$ is conservative: the probability that $n \widehat{V}_1(\A^m)$ underestimates $n \var(\overline X)$ by more than $\epsilon>0$ tends to zero. 
\begin{corollary}
Given Assumption \ref{assump:scaling}, then for any $\epsilon > 0$,
\[ \lim_{n\to\infty} \Pr(n\var(\overline{X}) - n\widehat{V}_1(\A^m;\Y) > \epsilon) = 0.  \]
\label{cor:conservative}
\end{corollary}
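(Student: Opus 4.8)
The plan is to combine Proposition \ref{prop:V1consistent} with the sandwiching inequality $\widehat{V}_1(\Ao;\Y) \le \widehat{V}_1(\A^m;\Y)$. Multiplying through by $n$ and rearranging, the event $\{n\var(\overline{X}) - n\widehat{V}_1(\A^m;\Y) > \epsilon\}$ is contained in the event $\{n\var(\overline{X}) - n\widehat{V}_1(\Ao;\Y) > \epsilon\}$, since $n\widehat{V}_1(\Ao;\Y) \le n\widehat{V}_1(\A^m;\Y)$ deterministically. Hence
\[
\Pr\big(n\var(\overline{X}) - n\widehat{V}_1(\A^m;\Y) > \epsilon\big) \le \Pr\big(n\var(\overline{X}) - n\widehat{V}_1(\Ao;\Y) > \epsilon\big) \le \Pr\big(|n\widehat{V}_1(\Ao;\Y) - n\var(\overline{X})| > \epsilon\big),
\]
and the right-hand side tends to $0$ by Proposition \ref{prop:V1consistent}. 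That is essentially the whole argument.

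The one point that needs care is that $\A^m$ is itself a random object (it depends on $\X$ and on the random graph data $\Y$), so one must make sure the deterministic inequality $\widehat{V}_1(\Ao;\Y) \le \widehat{V}_1(\A^m;\Y)$ holds pointwise on the sample space, not merely in expectation. This is immediate from the definitions: for every realization of $\Y$, we have $\Ao \in \mathcal{A}(\Y)$ (noted after Definition \ref{defn:compatibility}), and $\A^m$ maximizes $\widehat{V}_1(\cdot;\Y)$ over $\mathcal{A}(\Y)$ by construction, so $\widehat{V}_1(\Ao;\Y) \le \widehat{V}_1(\A^m;\Y)$ on every realization. Consequently the event inclusion above is a genuine subset relation between events, and monotonicity of probability applies.

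I do not anticipate a real obstacle here — the corollary is a direct consequence of the proposition plus the maximality property. The only thing to state cleanly is the event inclusion and the fact that it holds almost surely (indeed surely) rather than just on average. Nothing further is required; in particular, no new moment computations or graph-counting arguments are needed beyond what Proposition \ref{prop:V1consistent} already provides.
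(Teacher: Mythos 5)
Your proposal is correct and follows essentially the same route as the paper: the paper's proof likewise uses the pointwise (sure) inequality $\widehat{V}_1(\A^m;\Y) \ge \widehat{V}_1(\Ao;\Y)$ to bound $\Pr(n\var(\overline{X}) - n\widehat{V}_1(\A^m;\Y) > \epsilon)$ by $\Pr(n\var(\overline{X}) - n\widehat{V}_1(\Ao;\Y) > \epsilon)$ and then invokes Proposition \ref{prop:V1consistent}. Your explicit remark that the dominance holds on every realization (because $\Ao\in\mathcal{A}(\Y)$ and $\A^m$ maximizes over $\mathcal{A}(\Y)$) is a nice clarification of the paper's phrase ``across all sample realizations,'' but it is the same argument.
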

\begin{proof}
  Across all sample realizations, $\widehat{V}_1(\A^m;\Y) \geq \widehat{V}_1(\Ao;\Y)$. Then 
\begin{equation}
\begin{split}
   &\lim_{n\to\infty} \Pr(n\var(\overline{X}) - n\widehat{V}_1(\A^m;\Y) > \epsilon) \\
    &\le \lim_{n\to\infty} \Pr(n\var(\overline{X}) - n\widehat{V}_1(\A^m;\Y) + n\widehat{V}_1(\A^m;\Y) - n\widehat{V}_1(\Ao;\Y) > \epsilon) \\
    &= \lim_{n\to\infty} \Pr(n\var(\overline{X}) - n\widehat{V}_1(\Ao;\Y) > \epsilon) \\
    &= 0  
\end{split}
\end{equation}
by Proposition \ref{prop:V1consistent}.
\end{proof}
Corollary \ref{cor:conservative} does not imply consistency of $\widehat{V}_1(\A^m;\Y)$ as an estimator of $n\var(\overline{X})$, nor does it imply that the estimator converges to any particular limiting value. Rather we have established that, for large $n$, its distribution will tend to be at least as large as the true variance.


\subsection{Alternative bounds under homoskedasticity}

\label{sec:homoskedastic}

When all variances are equal, we can obtain alternative closed-form bounds that are computationally simpler and is less sensitive to between-sample variability in the empirical variance-covariance matrix. This estimator essentially only depends on the estimated variance of unit outcomes and the maximum number of edges in the dependency graph.
\begin{assumption}[Homoskedasticity]
$\var(X_i) = \var(X_j), \forall i,j\in\V$.
\label{assump:homoskedastic}
\end{assumption}
Under homoskedasticity, the general estimator $\widehat{V}_1(\A^m,\Y)$ developed in Section \ref{sec:general} provides conservative variance estimate.  A bound that is relatively computationally simple to compute can be derived by noting that when $\var(X_i)=\sigma^2$, $\cov(X_i,X_j) \le \sigma^2 \Ao_{ij}$.  To this end, define the estimator 
\begin{equation}	\label{eq:v2}
    \widehat{V}_2(\A;\Y)   = \frac{\hat\sigma^2}{n}\left[ 1+ \frac{1}{n}\sum_{i\in \V_S}\sum_{j\in \V_S} \A_{ij} \right].
\end{equation}
The oracle estimator $\widehat{V}_2(\Ao,\Y)$ is not generally consistent, though it is asymptotically conservative.
\begin{proposition}
Given Assumptions \ref{assump:scaling} and \ref{assump:homoskedastic}, then for any $\epsilon > 0$,
\[ \lim_{n\to\infty}  \Pr(n\var(\overline X) - n\widehat{V}_2(\Ao;\Y) > \epsilon) = 0.  \]
\end{proposition}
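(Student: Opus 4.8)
The plan is to pair a deterministic, homoskedasticity-driven upper bound for $n\var(\overline X)$ with the convergence of $\hat\sigma^2$ that is already implicit in the proof of Proposition \ref{prop:V1consistent}. Let $\sigma^2$ denote the common value of $\var(X_i)$ guaranteed by Assumption \ref{assump:homoskedastic}. The first step is the deterministic inequality
\[ n\var(\overline X) \;\le\; \sigma^2\Bigl(1 + \frac1n\sum_{i\in\V_S}\sum_{j\in\V_S}\Ao_{ij}\Bigr). \]
Writing $n\var(\overline X) = n^{-1}\sum_{i\in\V_S}\var(X_i) + n^{-1}\sum_{i\neq j}\cov(X_i,X_j)$, the diagonal contributes exactly $\sigma^2$. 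For $i \neq j$, if $\Ao_{ij}=0$ then $X_i$ and $X_j$ are independent by Definition \ref{defn:depgraph}, so $\cov(X_i,X_j)=0$, while if $\Ao_{ij}=1$ then $\cov(X_i,X_j)\le\sqrt{\var(X_i)\var(X_j)}=\sigma^2$ by the Cauchy--Schwarz inequality; in either case $\cov(X_i,X_j)\le\sigma^2\Ao_{ij}$, and since $\Ao_{ii}=0$ the bound follows.

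Next I would reduce the claim to a statement about $\hat\sigma^2$. Since $n\widehat{V}_2(\Ao;\Y)=\hat\sigma^2\bigl(1+n^{-1}\sum_{i,j}\Ao_{ij}\bigr)$, subtracting the bound above gives
\[ n\var(\overline X) - n\widehat{V}_2(\Ao;\Y) \;\le\; (\sigma^2 - \hat\sigma^2)\Bigl(1 + \frac1n\sum_{i\in\V_S}\sum_{j\in\V_S}\Ao_{ij}\Bigr). \]
By the bounded-degree clause of Assumption \ref{assump:scaling}, $n^{-1}\sum_{i,j}\Ao_{ij}\le c_2$, so the bracketed factor lies in $[1,1+c_2]$. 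Hence on the event $\{\,n\var(\overline X)-n\widehat{V}_2(\Ao;\Y)>\epsilon\,\}$ one necessarily has $\sigma^2-\hat\sigma^2>0$ and therefore $\sigma^2-\hat\sigma^2>\epsilon/(1+c_2)$, so it suffices to show $\Pr(\sigma^2-\hat\sigma^2>\delta)\to0$ for every $\delta>0$.

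For that last step, the argument in the proof of Proposition \ref{prop:V1consistent} already shows $\var(\hat\sigma^2)=O(n^{-1})$, so $\hat\sigma^2-\E[\hat\sigma^2]\arrowp0$ by Chebyshev's inequality. A direct computation gives $\E[\hat\sigma^2]=\sigma^2 + n^{-1}\sum_{i\in\V_S}(\E[X_i]-\mu)^2 - \var(\overline X)\ge\sigma^2-\var(\overline X)$, and $\var(\overline X)=O(n^{-1})\to0$; hence $\liminf_n\E[\hat\sigma^2]\ge\sigma^2$ and so $\Pr(\hat\sigma^2<\sigma^2-\delta)\to0$. Combining with the previous paragraph, $\Pr(n\var(\overline X)-n\widehat{V}_2(\Ao;\Y)>\epsilon)\le\Pr(\sigma^2-\hat\sigma^2>\epsilon/(1+c_2))\to0$, which is the claim.

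The one point that needs care is the last step: $\hat\sigma^2$ does \emph{not} in general converge to $\sigma^2$, since its probability limit (when it exists) also picks up the nonnegative between-unit dispersion term $n^{-1}\sum_i(\E[X_i]-\mu)^2$, which need neither vanish nor converge. The saving observation is that this term can only inflate $\widehat{V}_2$, so a one-sided bound --- $\hat\sigma^2$ is, with probability tending to one, not appreciably below $\sigma^2$ --- is all that is required, and it follows from the mean-square convergence already available together with the elementary lower bound on $\E[\hat\sigma^2]$.
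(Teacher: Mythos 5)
Your proof is correct, and it takes a somewhat different route from the paper's. The paper introduces an intermediate oracle $\widehat{V}_2^*(\Ao;\Y)=\frac{\hat\sigma^2}{n}\bigl[1+\frac1n\sum_i\sum_j\Ao_{ij}\rho_{ij}\bigr]$ (with $\rho_{ij}$ the pairwise correlations), argues that $n\widehat{V}_2^*$ converges to $n\var(\overline X)$ using the mean-square convergence of $\hat\sigma^2$ and the bounded factor $1\le 1+\frac1n\sum\Ao_{ij}\le 1+c_2$, and then dominates it by $\widehat{V}_2$ via $\rho_{ij}\le 1$. You instead stay entirely at the population level: the deterministic Cauchy--Schwarz bound $n\var(\overline X)\le\sigma^2\bigl(1+\frac1n\sum_i\sum_j\Ao_{ij}\bigr)$ reduces the claim to the one-sided statement $\Pr(\sigma^2-\hat\sigma^2>\delta)\to0$, which you get from $\var(\hat\sigma^2)=O(n^{-1})$ and $\E[\hat\sigma^2]\ge\sigma^2-\var(\overline X)$. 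The ingredients (Cauchy--Schwarz, the $1+c_2$ degree bound, Chebyshev for $\hat\sigma^2$) are the same, but your organization buys something real: the paper's two-sided consistency claim for $\widehat{V}_2^*$ implicitly treats $\hat\sigma^2$ as an estimator of $\sigma^2$, which fails when the unit means $\E[X_i]$ are dispersed, since the probability limit of $\hat\sigma^2$ picks up $n^{-1}\sum_i(\E[X_i]-\mu)^2$ --- exactly the point you flag in your closing remark. Your one-sided argument shows this dispersion term can only push $\widehat{V}_2$ upward, so the conservative conclusion holds without any implicit equal-means assumption; in that sense your write-up is more careful than the paper's, at the cost of not yielding the (stronger, but only conditionally valid) consistency statement for the correlation-weighted oracle.
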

\begin{proof}
To prove the claim, we first define an alternative oracle estimator which presumes knowledge of the $\rho_i$ values, $$\widehat{V}_2^*(\Ao;\Y) =  \frac{\hat\sigma^2}{n}\left[ 1+ \frac{1}{n}\sum_{i\in \V_S}\sum_{j\in \V_S} \Ao_{ij} \rho_i \right].$$ Multiplying by $n$, 
$n\widehat{V}_2^*(\Ao;\Y) =  {\hat\sigma^2}\left[ 1+ \frac{1}{n}\sum_{i\in \V_S}\sum_{j\in \V_S} \Ao_{ij} \rho_i \right].$ As in the proof of Proposition 1,  $\hat\sigma^2$ converges in mean square. By Assumption 2, $1 \leq 1+ \frac{1}{n}\sum_{i\in \V_S}\sum_{j\in \V_S} \Ao_{ij} \leq 1 + c_2$, allowing us to invoke Slutsky's Theorem and Chebyshev's Inequality to show $\lim_{n\to\infty}  \Pr(|n \widehat{V}_2^*(\Ao;\Y) - n \var(\overline X)| < \epsilon) = 0.$ The Cauchy-Schwarz Inequality (i.e., all $\rho_i \leq 1$) implies $\widehat{V}_2^*(\Ao;\Y) \leq \widehat{V}_2(\Ao;\Y)$ across all sample realizations. The result follows directly.
\end{proof}

As before, we can maximize the estimator $\widehat{V}_2(\A;\Y)$ over the family of compatible graphs. Define $\mathcal{A}_2^m = \{\A\in\mathcal{A}(\Y):\  \widehat{V}_2(\A;\Y) \text{ is maximized}\}$, and let $A^m\in\mathcal{A}_2^m$. To find an element of $\mathcal{A}_2^m$, we solve the 0-1 integer linear program
\begin{equation} \label{eq:ilp2} 
\begin{aligned}
& \underset{\A}{\text{maximize}}
&& \mathbf{1}' \A \mathbf{1} \\
& \text{subject to}
&& \A \mathbf{1} \preceq \mathbf{d}, \\
&&& \A \succeq \A_R,
\end{aligned}
\end{equation}
where again $\A$ is an arbitrary 0-1 adjacency matrix and $\A_R$ is the adjacency matrix of $\G_R$. Note that  finding the solution to this problem does not depend on the empirical variance-covariance matrix; the variability of the estimator $\widehat{V}_2(\A^m;\Y)$ is purely attributable to estimation error in $\hat\sigma^2$.

Since $\widehat{V}_2(\A;\Y)$ does not rely on any feature of $\A$ other than the number of positive entries, we can derive a looser closed-form upper bound by considering the maximum number of edges that can be in $\E_S$.  For $i\in \V_S$, let $d_i' = \min\{d_i,n-1\}$ be the degree of $i$ in $\G$, truncated at $n-1$.  Let 
\begin{equation}
  \widehat{V}_2'(\Y) = \frac{\hat\sigma^2}{n}\left[ 1+ \frac{1}{n}\sum_{i\in \V_S} d'_i \right] .
  \label{eq:V2p}
\end{equation}
The estimator \eqref{eq:V2p} does not depend on any particular member of the set $\mathcal{A}$ of compatible adjacency matrices.  

\begin{lemma}
  We have $\widehat{V}_2(\Ao,\Y) \le \widehat{V}_2(\A^m;\Y) \le  \widehat{V}_2'(\Y)$, 
  with $\widehat{V}_2(\A^m;\Y) = \widehat{V}_2'(\Y)$ when there exists a compatible adjacency matrix $\A^m\in\mathcal{A}$ such that $d_i' = \sum_{j\in V_S} \A_{ij}^m$.
  \label{lem:V2}
\end{lemma}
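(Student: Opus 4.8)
The plan is to reduce everything to a single structural observation: $\widehat{V}_2(\A;\Y)$ depends on $\A$ only through the edge-count surrogate $S(\A):=\sum_{i\in\V_S}\sum_{j\in\V_S}\A_{ij}$, and, because $\hat\sigma^2\ge 0$, it is nondecreasing in $S(\A)$. Granting this, the leftmost inequality $\widehat{V}_2(\Ao,\Y)\le\widehat{V}_2(\A^m;\Y)$ is immediate, since $\Ao\in\mathcal{A}(\Y)$ while $\A^m$ maximizes $\widehat{V}_2(\cdot;\Y)$ over all compatible adjacency matrices by the definition of $\mathcal{A}_2^m$.

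For the middle inequality I would bound $S(\A)$ uniformly over $\A\in\mathcal{A}(\Y)$. Fix $i\in\V_S$. Compatibility (Definition \ref{defn:compatibility}) gives $\sum_{j\in\V_S}\A_{ij}\le d_i$; separately, since $\A$ is an $n\times n$ adjacency matrix of a simple graph on $\V_S$ (zero diagonal), row $i$ has at most $n-1$ nonzero entries, so $\sum_{j\in\V_S}\A_{ij}\le n-1$. Combining these two bounds gives $\sum_{j\in\V_S}\A_{ij}\le\min\{d_i,n-1\}=d_i'$, and summing over $i$ yields $S(\A)\le\sum_{i\in\V_S}d_i'$. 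Substituting into the definition \eqref{eq:v2} of $\widehat{V}_2$ and comparing with \eqref{eq:V2p} gives $\widehat{V}_2(\A;\Y)\le\widehat{V}_2'(\Y)$ for every compatible $\A$, in particular for $\A^m$.

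For the equality clause, suppose some compatible $\A^m$ has $\sum_{j\in\V_S}\A^m_{ij}=d_i'$ for every $i\in\V_S$. Then $S(\A^m)=\sum_{i\in\V_S}d_i'$, so plugging into \eqref{eq:v2} and \eqref{eq:V2p} gives $\widehat{V}_2(\A^m;\Y)=\widehat{V}_2'(\Y)$ exactly. Since no compatible matrix can make $S$ larger, this $\A^m$ is in fact a maximizer, i.e.\ $\A^m\in\mathcal{A}_2^m$, so the notation used in the statement is internally consistent.

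I do not anticipate any genuine obstacle; the proof is essentially the three observations above. The only points that need care are (i) tracking the sign of $\hat\sigma^2$ so that monotonicity in $S(\A)$ — and hence the inequality directions — is preserved (the degenerate case $\hat\sigma^2=0$ simply makes all three quantities zero), and (ii) invoking \emph{both} the degree constraint from Definition \ref{defn:compatibility} and the simple-graph structure of $\A$ so that the per-row bound lands on the truncated degree $d_i'$ rather than on $d_i$. The lemma deliberately asserts nothing about \emph{when} a compatible matrix attaining all row bounds simultaneously exists — that is a constrained graph-realizability (\erdos--Gallai-type) question — so it need not be settled here.
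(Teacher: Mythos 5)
Your proof is correct and follows essentially the same route as the paper's: the first inequality comes from $\Ao\in\mathcal{A}(\Y)$ and the definition of the maximizer, the second from bounding each row sum of a compatible $\A$ by the truncated degree $d_i'$, and the equality clause from noting that a compatible matrix attaining all the row bounds $d_i'$ must itself be a maximizer. Your explicit justification that the per-row bound is $\min\{d_i,n-1\}$ (compatibility plus the zero diagonal of a simple-graph adjacency matrix) and your remark on the sign of $\hat\sigma^2$ merely spell out steps the paper leaves implicit.
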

\begin{proof}
  By definition, $\widehat{V}_2(\A;\Y) \le \widehat{V}_2(\A^m;\Y)$ for every $\A\in\mathcal{A}$.  Since $\Ao\in \mathcal{A}$, it follows that $\widehat{V}_2(\Ao,\Y) \le \widehat{V}_2(\A^m;\Y)$.  Now let $d_i^m=\sum_{j\in \V_S} \A^m_{ij}$ be the degree of $i$ in the adjacency matrix $\A^m$, and note that for every $i\in \V_S$, $d_i^m \le d_i'$.  Then 
  \begin{equation*}
    \begin{split}
      \widehat{V}_2(\A^m;\Y) &= \frac{\hat\sigma^2}{n}\left[ 1+ \frac{1}{n}\sum_{i\in \V_S}\sum_{j\in \V_S} \A_{ij}^m \right] \\
      &= \frac{\hat\sigma^2}{n}\left[ 1+ \frac{1}{n}\sum_{i\in \V_S} d_i^m \right] \\
      &\le \frac{\hat\sigma^2}{n}\left[ 1+ \frac{1}{n}\sum_{i\in \V_S} d_i' \right] \\
      &= \widehat{V}_2'(\Y)
    \end{split}
  \end{equation*}
  as claimed.  Now consider a compatible adjacency matrix $\A\in \mathcal{A}$ with the property that $d_i'=\sum_{j\in\V_S}\A_{ij}$.  From the program \eqref{eq:ilp2} we see that $\mathbf{1}'\A\mathbf{1} = \sum_{i\in\V_S} d_i'$ is two times the maximal number of edges in $\G_S$, $\A\mathbf{1} = \bd' \preceq \bd$ by the definition of $\bd'=(d_1',\ldots,d_n')$, and $\A \succeq \A_R$ since $\A\in\mathcal{A}$.  It follows that $A\in \mathcal{A}^m$, so we may call $\A^m=\A$. Therefore $\widehat{V}_2(\A^m;\Y)=\widehat{V}_2'(\Y)$, as claimed.
\end{proof}
Lemma \ref{lem:V2} implies a simple, conservative correction to the variance under homoskedasticity; simply multiply the conventional variance estimate $\frac{\hat\sigma^2}{n}$ by $1 + \overline {d'}$, where $\overline {d'}$ is the average truncated degree.

As expected, the upper bound estimators under homoskedasticity are asymptotically conservative.
\begin{corollary}
Given Assumptions \ref{assump:scaling} and \ref{assump:homoskedastic}, then for any $\epsilon > 0$,
\begin{align*}
 & \lim_{n\to\infty} \Pr(n \var(\overline{X}) - n\widehat{V}_2(\A^m;\Y) > \epsilon) = 0, \\ 
 & \lim_{n\to\infty} \Pr(n \var(\overline X) - n\widehat{V}_2'(\Y) > \epsilon) = 0.
 \end{align*}
\end{corollary}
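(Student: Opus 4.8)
The plan is to reduce both statements to the preceding proposition via the deterministic sandwich in Lemma \ref{lem:V2}, mimicking the proof of Corollary \ref{cor:conservative}. First I would record that Lemma \ref{lem:V2} gives the ordering $\widehat{V}_2(\Ao;\Y) \le \widehat{V}_2(\A^m;\Y) \le \widehat{V}_2'(\Y)$ pointwise across all sample realizations — this is deterministic, not merely in probability, since $\hat\sigma^2 \ge 0$ always and the bracketed terms are ordered by the edge counts $\sum_{j\in\V_S}\Ao_{ij}\le\sum_{j\in\V_S}\A^m_{ij}$ and $d_i^m\le d_i'$. Multiplying by $n>0$ and subtracting from $n\var(\overline X)$ reverses the inequalities, so surely
\[
n\var(\overline X) - n\widehat{V}_2'(\Y) \;\le\; n\var(\overline X) - n\widehat{V}_2(\A^m;\Y) \;\le\; n\var(\overline X) - n\widehat{V}_2(\Ao;\Y).
\]

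Next, for a fixed $\epsilon>0$ this chain yields the event inclusions
\[
\{ n\var(\overline X) - n\widehat{V}_2'(\Y) > \epsilon \} \;\subseteq\; \{ n\var(\overline X) - n\widehat{V}_2(\A^m;\Y) > \epsilon \} \;\subseteq\; \{ n\var(\overline X) - n\widehat{V}_2(\Ao;\Y) > \epsilon \},
\]
so monotonicity of probability bounds the probability of each of the first two events by $\Pr\big(n\var(\overline X) - n\widehat{V}_2(\Ao;\Y) > \epsilon\big)$.

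Finally I would invoke the preceding proposition, which asserts exactly that $\lim_{n\to\infty}\Pr\big(n\var(\overline X) - n\widehat{V}_2(\Ao;\Y) > \epsilon\big) = 0$; passing to the limit through the inequalities above delivers both claimed limits. I do not anticipate any genuine obstacle here: the argument is a direct transcription of Corollary \ref{cor:conservative}, with $\widehat{V}_1$ replaced by $\widehat{V}_2$ and an extra outer layer $\widehat{V}_2'(\Y)$ appended. The only point requiring a moment's care is that the domination in Lemma \ref{lem:V2} must hold surely rather than merely in probability for the event inclusions to be valid — but Lemma \ref{lem:V2} already establishes the pointwise inequalities.
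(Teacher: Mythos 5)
Your proposal is correct and follows essentially the same route as the paper, which proves this corollary by combining the deterministic ordering $\widehat{V}_2(\Ao;\Y) \le \widehat{V}_2(\A^m;\Y) \le \widehat{V}_2'(\Y)$ from Lemma \ref{lem:V2} with the argument used for Corollary \ref{cor:conservative}, invoking the asymptotic conservativeness of the oracle estimator $\widehat{V}_2(\Ao;\Y)$ established in the preceding proposition. Your explicit event inclusions are just a slightly more detailed transcription of that same reasoning.
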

The proof follows from Lemma \ref{lem:V2} and the same reasoning employed in the proof of Corollary \ref{cor:conservative}.


\section{Wald-type confidence intervals}

We now prove that our variance estimates can be used to form valid Wald-type confidence intervals about the sample mean. First, we establish a central limit theorem for the sample mean given our asymptotic scaling.
\begin{lemma}
Given Assumption \ref{assump:scaling},
\[ \left(\overline X -\mu \right)\bigg/\sqrt{\var(\overline X)} \rightarrow_{d} N(0, 1) \].
\end{lemma}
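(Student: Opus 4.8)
The plan is to apply a Berry--Esseen-type normal approximation bound from Stein's method for sums of random variables with a dependency graph, of the kind established by \citet{baldi1989normal} and sharpened in later work (Rinott; Chen and Shao). Write $S_n = \sum_{i\in\V_S}(X_i-\mu)$, so that the quantity in the statement is $W_n := S_n/\sqrt{\var(S_n)}$, with $\var(S_n) = n^2\var(\overline X)$. By Assumption \ref{assump:scaling}, the centered variables $\{X_i-\mu:\ i\in\V_S\}$ have mean zero, are bounded in absolute value by $c_1$ almost surely, and admit $\G_S$ as a dependency graph whose maximum degree is at most $c_2$; these are exactly the ingredients such a bound requires.

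The second step applies the bound. For a universal constant $C$, writing $\Phi$ for the standard normal distribution function,
\[
  \sup_{t\in\mathbf{R}} \bigl| \Pr(W_n \le t) - \Phi(t)\bigr|
  \ \le\ C\left(\frac{n\,(c_2+1)^3 c_1^4}{\var(S_n)^2}\right)^{\!1/2}
  \ +\ C\,\frac{n\,(c_2+1)^2 c_1^3}{\var(S_n)^{3/2}} .
\]
The precise constants and exponents depend on the exact statement one cites, but every such bound is a sum of terms of the form $n\,D^{a}B^{b}/\var(S_n)^{c}$ with $b=2c$ and $a\le 2c$, where $D$ bounds the maximum degree of the dependency graph and $B$ bounds the summands. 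This is the substantive step; it is essentially off the shelf, and the only thing to check is that the cited bound is non-asymptotic, so that it applies to our triangular array of possibly non-identically-distributed summands --- which the Stein's-method bounds are.

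The third step is bookkeeping with powers of $n$. Assumption \ref{assump:scaling} gives $n\var(\overline X)\to c_3\in(0,\infty)$, hence $\var(S_n) = n\cdot\bigl(n\var(\overline X)\bigr) = c_3\,n\,(1+o(1))$, so $\var(S_n)\asymp n$. Since $c_1$ and $c_2$ are fixed constants, the first term on the right-hand side above is $O\bigl((n/n^2)^{1/2}\bigr)=O(n^{-1/2})$ and the second is $O(n/n^{3/2})=O(n^{-1/2})$; thus the whole bound is $O(n^{-1/2})\to 0$. Uniform convergence of the distribution function of $W_n$ to $\Phi$ gives $W_n \to_d N(0,1)$, which is the claim.

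I do not anticipate a genuine obstacle, but the feature of the argument that demands care is the dependence of the bound on $\var(S_n)$: it deteriorates like $\var(S_n)^{-1/2}$ and worse, so some lower bound on the growth of $\var(S_n)$ is indispensable, and this is precisely what the nondegenerate-limiting-variance condition $\lim_n n\var(\overline X)=c_3>0$ supplies. Without it, strong negative dependence among the $X_i$ could make $\var(S_n)=o(n)$ and defeat asymptotic normality. A secondary, purely cosmetic point is reconciling conventions across references (whether the degree is counted including the vertex itself; Wasserstein versus Kolmogorov distance); since convergence in the Wasserstein distance to a continuous limit already implies convergence in distribution, one may equally well quote a smooth-test-function bound of \citet{baldi1989normal} and dispense with the passage to the Kolmogorov distance.
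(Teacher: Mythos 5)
Your proposal is correct and follows the same route as the paper, which simply cites a Stein's-method normal approximation for local dependence (Theorem 2.7 of Chen and Shao, 2004) and notes that the bounded outcomes, bounded dependency-graph degrees, and nondegenerate limiting variance of Assumption \ref{assump:scaling} satisfy its hypotheses. Your additional bookkeeping showing the Berry--Esseen bound is $O(n^{-1/2})$ is a correct elaboration of the same argument.
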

Lemma 2, a standard result in applying Stein's method to the setting of local dependence, has been proven by, e.g., Theorem 2.7 of \citet{chen2004normal}. Similarly, we reiterate the well-known basis for Wald-type confidence intervals.

\begin{lemma}
  Given Assumption \ref{assump:scaling}, if a variance estimator $\widehat{V}(\A;\Y)$ satisfies 
  \[ \lim_{n\to\infty}\Pr(| n \widehat{V}(\A;\Y) - n \var(\overline X) | > \epsilon) = 0, \] 
  then confidence intervals formed as $\overline X \pm z_{1-\alpha/2} \sqrt{\widehat{V}(\A;\Y)}$ will have $100(1-\alpha)\%$ coverage for $\mu$ in large $n$.
  \label{lem:wald}
\end{lemma}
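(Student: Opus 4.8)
The plan is to combine the central limit theorem of Lemma~2 with the convergence hypothesis on the variance estimator via Slutsky's theorem, in the standard studentization argument. First I would write the pivotal quantity as
\[
  \frac{\overline X - \mu}{\sqrt{\widehat V(\A;\Y)}}
  = \frac{\overline X - \mu}{\sqrt{\var(\overline X)}} \cdot \sqrt{\frac{\var(\overline X)}{\widehat V(\A;\Y)}}
  = \frac{\overline X - \mu}{\sqrt{\var(\overline X)}} \cdot \sqrt{\frac{n\,\var(\overline X)}{n\,\widehat V(\A;\Y)}}.
\]
By Lemma~2, the first factor converges in distribution to $N(0,1)$. For the second factor, the hypothesis gives $n\widehat V(\A;\Y) - n\var(\overline X) \arrowp 0$; since Assumption~\ref{assump:scaling} guarantees $n\var(\overline X) \to c_3 \in (0,\infty)$, the continuous mapping theorem yields $n\widehat V(\A;\Y) \arrowp c_3$, and hence the ratio $n\var(\overline X)\big/ n\widehat V(\A;\Y) \arrowp 1$ and its square root $\arrowp 1$. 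Slutsky's theorem then gives $(\overline X - \mu)\big/\sqrt{\widehat V(\A;\Y)} \rightarrow_d N(0,1)$.

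From here the coverage statement is immediate: for the interval $\overline X \pm z_{1-\alpha/2}\sqrt{\widehat V(\A;\Y)}$,
\[
  \Pr\!\left( \mu \in \overline X \pm z_{1-\alpha/2}\sqrt{\widehat V(\A;\Y)} \right)
  = \Pr\!\left( \left| \frac{\overline X - \mu}{\sqrt{\widehat V(\A;\Y)}} \right| \le z_{1-\alpha/2} \right)
  \longrightarrow \Pr\big(|Z| \le z_{1-\alpha/2}\big) = 1-\alpha,
\]
where $Z \sim N(0,1)$ and the convergence uses that the limiting distribution is continuous so the distribution functions converge at the continuity points $\pm z_{1-\alpha/2}$.

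I do not anticipate a real obstacle here: every ingredient is quoted or assumed. The one point requiring a touch of care is that the convergence hypothesis is stated for $n\widehat V$ rather than $\widehat V$ itself, which is exactly why the nondegeneracy clause $n\var(\overline X) \to c_3 > 0$ in Assumption~\ref{assump:scaling} is needed — it is what lets the $n$'s cancel in the ratio without the denominator degenerating to zero. A brief remark noting that this lemma applies verbatim to $\widehat V_1(\A^m;\Y)$, $\widehat V_2(\A^m;\Y)$, and $\widehat V_2'(\Y)$ only in the \emph{conservative} direction — since for those estimators one has only the one-sided control of Corollaries~\ref{cor:conservative} and the subsequent corollary, so the argument above then yields asymptotic coverage at least $1-\alpha$ rather than exactly $1-\alpha$ — would be worth including to tie the section together.
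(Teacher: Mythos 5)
Your proposal is correct and follows essentially the same route as the paper, which simply notes that the lemma follows from Lemma~2 and Slutsky's theorem; you have filled in the standard studentization details (using the nondegeneracy $n\var(\overline X)\to c_3>0$ to cancel the $n$'s), which is exactly the intended argument. No gap.
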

Lemma 3 follows directly from Lemma 2 and Slutsky's Theorem.

We now establish the validity of confidence intervals constructed via Lemma \ref{lem:wald}. 
\begin{proposition}
  Given Assumption \ref{assump:scaling}, if a variance estimator $\widehat{V}(\A;\Y)$ satisfies 
  \[ \lim_{n\to\infty}\Pr( n\var(\overline{X}) -  n\widehat{V}(\A;\Y) > \epsilon) = 0, \]
  then confidence intervals formed as $\overline X \pm z_{1-\alpha/2} \sqrt{\widehat{V}(\A;\Y)}$ will have at least $100(1-\alpha)\%$ coverage for $\mu$ in large $n$.
\end{proposition}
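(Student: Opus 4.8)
\textit{Proof proposal.}

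The plan is to sandwich the data-dependent interval between the (infeasible) oracle interval $\overline X \pm z_{1-\alpha/2}\sqrt{\var(\overline X)}$, shrunk by a vanishing amount, and to use the central limit theorem of Lemma 2 to pin the coverage of that oracle interval at exactly $1-\alpha$. Note that Lemma \ref{lem:wald} cannot be invoked directly, since $\widehat V(\A;\Y)$ is assumed only asymptotically conservative, not consistent; the one-sided hypothesis is, however, precisely what lets us lower-bound $\sqrt{\widehat V(\A;\Y)}$ by $\sqrt{\var(\overline X)}$ up to a negligible term on an event of probability tending to one.

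First I would fix $\epsilon>0$ small enough that $\epsilon<c_3$ and set $E_n=\{\,n\var(\overline X)-n\widehat V(\A;\Y)\le\epsilon\,\}$, so that $\Pr(E_n)\to 1$ by the hypothesis of the proposition. Because $n\var(\overline X)\to c_3>0$ under Assumption \ref{assump:scaling}, for all large $n$ we have $n\var(\overline X)-\epsilon>0$; hence on $E_n$ the estimator $\widehat V(\A;\Y)$ is strictly positive (so $\sqrt{\widehat V(\A;\Y)}$ is well defined) and $\sqrt{\widehat V(\A;\Y)}\ge\sqrt{\var(\overline X)-\epsilon/n}$. Since $z_{1-\alpha/2}>0$, on $E_n$ the data-dependent interval contains the shrunken oracle interval $\overline X\pm z_{1-\alpha/2}\sqrt{\var(\overline X)-\epsilon/n}$, which gives the event inclusion
\begin{equation*}
\big\{\,|\overline X-\mu|>z_{1-\alpha/2}\sqrt{\widehat V(\A;\Y)}\,\big\}\cap E_n \;\subseteq\; \big\{\,|\overline X-\mu|>z_{1-\alpha/2}\sqrt{\var(\overline X)-\epsilon/n}\,\big\}.
\end{equation*}

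Then I would bound the non-coverage probability by
\begin{equation*}
\Pr\big(|\overline X-\mu|>z_{1-\alpha/2}\sqrt{\widehat V(\A;\Y)}\big)\;\le\;\Pr\big(|\overline X-\mu|>z_{1-\alpha/2}\sqrt{\var(\overline X)-\epsilon/n}\big)+\Pr(E_n^c),
\end{equation*}
and rewrite the first term on the right as $\Pr\big(|(\overline X-\mu)/\sqrt{\var(\overline X)}|>z_{1-\alpha/2}\sqrt{1-\epsilon/(n\var(\overline X))}\big)$. Since $\epsilon/(n\var(\overline X))\to 0$, Lemma 2 together with Slutsky's theorem shows this converges to $\Pr(|Z|>z_{1-\alpha/2})=\alpha$ (with $Z$ standard normal), while $\Pr(E_n^c)\to 0$; hence $\limsup_{n\to\infty}\Pr\big(|\overline X-\mu|>z_{1-\alpha/2}\sqrt{\widehat V(\A;\Y)}\big)\le\alpha$, which is the claim. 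The only real subtlety — the main obstacle — is keeping the $\epsilon/n$ perturbation harmless, and this is exactly where the nondegenerate-limiting-variance part of Assumption \ref{assump:scaling} is essential: it guarantees both that the shrunken oracle variance is eventually positive and that the distortion $\sqrt{1-\epsilon/(n\var(\overline X))}$ tends to $1$; the remainder is bookkeeping.
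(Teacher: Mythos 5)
Your overall strategy---lower-bounding $\sqrt{\widehat V(\A;\Y)}$ by a slightly shrunken oracle standard error on a high-probability event and then invoking Lemma 2---differs from the paper's and is viable, but your final limit computation contains a genuine error. You claim $\epsilon/(n\var(\overline X))\to 0$ and attribute this to the nondegenerate-limiting-variance condition. That condition says $n\var(\overline X)\to c_3$ with $0<c_3<\infty$, so in fact $\epsilon/(n\var(\overline X))\to\epsilon/c_3>0$: the distortion factor converges to $\sqrt{1-\epsilon/c_3}<1$, not to $1$ (it would tend to $1$ only if $n\var(\overline X)\to\infty$, which the assumption precisely rules out). Consequently the first term in your union bound converges to $\Pr\big(|Z|>z_{1-\alpha/2}\sqrt{1-\epsilon/c_3}\big)$, which is strictly larger than $\alpha$, and your stated conclusion $\limsup_{n\to\infty}\Pr\big(|\overline X-\mu|>z_{1-\alpha/2}\sqrt{\widehat V(\A;\Y)}\big)\le\alpha$ does not follow as written.

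The argument is salvageable with one more step: your bound gives, for every fixed $\epsilon\in(0,c_3)$, $\limsup_{n\to\infty}\Pr\big(|\overline X-\mu|>z_{1-\alpha/2}\sqrt{\widehat V(\A;\Y)}\big)\le\Pr\big(|Z|>z_{1-\alpha/2}\sqrt{1-\epsilon/c_3}\big)$; since the hypothesis of the proposition holds for every $\epsilon>0$ and the left-hand side does not depend on $\epsilon$, letting $\epsilon\downarrow 0$ and using continuity of the standard normal distribution yields the claim. For comparison, the paper avoids this $\epsilon$-bookkeeping entirely by truncation: it defines $U=\widehat V(\A;\Y)$ when $\widehat V(\A;\Y)\le\var(\overline X)$ and $U=\var(\overline X)$ otherwise, observes that the one-sided hypothesis makes $U$ satisfy the two-sided condition of Lemma \ref{lem:wald} (so intervals built from $U$ attain nominal coverage), and then uses $\widehat V(\A;\Y)\ge U$ pointwise, so the actual intervals are wider and hence cover at least as often. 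Your direct route would, once repaired, additionally give an explicit quantitative undercoverage bound in terms of $\epsilon/c_3$, but as submitted it is missing the $\epsilon\downarrow 0$ step and rests on a false limit claim.
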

\begin{proof}
Define a random variable $U$ such that 
\[ U = \begin{cases} 
    \widehat{V}(\A;\Y) & \text{if } \widehat{V}(\A;\Y) \le \var(\overline X)  \\
    \var(\overline{X})  & \text{otherwise}. 
  \end{cases}
  \]
  Then $\lim_{n\to\infty}\Pr(|nU - n\var(\overline X)| > \epsilon) = 0$, and by Lemma 3 Wald-type confidence intervals formed with $U$ as a variance estimate will have at least proper coverage. Across every sample realization, $\widehat{V}(\A;\Y) \geq U$, and thus the coverage of Wald-type confidence intervals using $\widehat{V}(\A;\Y)$ will be also be at least proper levels.
\end{proof}

It therefore follows that Wald-type confidence intervals constructed using the conservative variance estimators derived in Section 3 yield asymptotic coverage at at least nominal levels.
\begin{corollary}
  Given Assumption \ref{assump:scaling}, then confidence intervals formed as $\overline X \pm z_{1-\alpha/2} \sqrt{\widehat{V}_1(\A^m)}$ have at least $100(1-\alpha)\%$ coverage for $\mu$ in large $n$.
\end{corollary}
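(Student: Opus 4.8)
The plan is to chain together the conservativeness result already established for $\widehat{V}_1(\A^m;\Y)$ with the generic validity statement for Wald intervals, so that essentially nothing new needs to be proved. Concretely, Corollary~\ref{cor:conservative} shows that, under Assumption~\ref{assump:scaling}, for every $\epsilon>0$
\[ \lim_{n\to\infty}\Pr\big(n\var(\overline X) - n\widehat{V}_1(\A^m;\Y) > \epsilon\big) = 0, \]
which is exactly the one-sided convergence hypothesis required by the Proposition of Section~4, with the identification $\widehat{V}(\A;\Y) = \widehat{V}_1(\A^m;\Y)$. Invoking that Proposition then delivers at least $100(1-\alpha)\%$ asymptotic coverage for $\mu$ of the interval $\overline X \pm z_{1-\alpha/2}\sqrt{\widehat{V}_1(\A^m;\Y)}$, which is the claim.

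Unwinding the dependencies, the only ingredients are: (i) the central limit theorem of Lemma~2, valid under Assumption~\ref{assump:scaling}; (ii) Slutsky's theorem, used through Lemma~3 and the truncation random variable $U$ in the proof of the Section~4 Proposition; and (iii) the sample-path inequality $\widehat{V}_1(\A^m;\Y)\ge\widehat{V}_1(\Ao;\Y)$, which holds because $\A^m$ maximizes $\widehat{V}_1(\cdot;\Y)$ over $\mathcal{A}(\Y)$ and $\Ao\in\mathcal{A}(\Y)$, together with the consistency of the oracle estimator from Proposition~\ref{prop:V1consistent}. All three are already available in the excerpt, so the corollary is a genuine corollary: the ``hard part'' is merely bookkeeping, confirming that the conclusion of Corollary~\ref{cor:conservative} matches verbatim the hypothesis of the Section~4 Proposition and that Assumption~\ref{assump:scaling} is the only standing assumption either result needs.

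One point worth a sentence is that $\A^m$ need not be unique; any (measurable) selection from $\mathcal{A}_1^m$ suffices, since the argument uses only the maximality property $\widehat{V}_1(\A^m;\Y)\ge\widehat{V}_1(\A;\Y)$ for all $\A\in\mathcal{A}(\Y)$ and never the identity of $\A^m$. In particular, the coverage guarantee is invariant to which optimal solution of the integer linear program~\eqref{eq:ilp1} a solver returns. I do not anticipate any real obstacle here.
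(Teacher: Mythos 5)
Your proposal is correct and matches the paper's own argument: the paper proves this corollary by combining Corollary~\ref{cor:conservative} with the Section~4 proposition on conservative variance estimators, exactly as you do. Your added remarks on the dependency chain and the non-uniqueness of $\A^m$ are fine but not needed beyond that.
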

\begin{corollary}
  Given Assumptions \ref{assump:scaling} and \ref{assump:homoskedastic}, then confidence intervals formed as $\overline X \pm z_{1-\alpha/2} \sqrt{\widehat{V}_2(\A^m;\Y)}$ or  $\overline X \pm z_{1-\alpha/2} \sqrt{\widehat{V}_2'(\A^m;\Y)}$ have at least $100(1-\alpha)\%$ coverage for $\mu$ in large $n$.
\end{corollary}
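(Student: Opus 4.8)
The plan is to treat this as an immediate application of the preceding Proposition on the validity of Wald-type intervals, once we feed it the conservativeness guarantees already established for $\widehat{V}_2$ and $\widehat{V}_2'$. Recall that that Proposition states: under Assumption \ref{assump:scaling}, if a variance estimator $\widehat{V}(\A;\Y)$ satisfies $\lim_{n\to\infty}\Pr(n\var(\overline{X}) - n\widehat{V}(\A;\Y) > \epsilon) = 0$ for every $\epsilon>0$, then the interval $\overline{X} \pm z_{1-\alpha/2}\sqrt{\widehat{V}(\A;\Y)}$ has at least $100(1-\alpha)\%$ asymptotic coverage for $\mu$. So all that is needed is to exhibit the two estimators named in the statement as members of this class.

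First I would invoke the corollary immediately following Lemma \ref{lem:V2}, which --- under Assumptions \ref{assump:scaling} and \ref{assump:homoskedastic} --- gives precisely $\lim_{n\to\infty}\Pr(n\var(\overline{X}) - n\widehat{V}_2(\A^m;\Y) > \epsilon) = 0$ and $\lim_{n\to\infty}\Pr(n\var(\overline{X}) - n\widehat{V}_2'(\Y) > \epsilon) = 0$ for every $\epsilon>0$. Second, I would apply the Proposition twice, once with $\widehat{V}=\widehat{V}_2(\A^m;\Y)$ and once with $\widehat{V}=\widehat{V}_2'(\Y)$; in each case the hypothesis of the Proposition is exactly one of the two limits just recalled, so its conclusion follows directly. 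This mirrors the argument for the $\widehat{V}_1$ corollary, which drew on Corollary \ref{cor:conservative} in the same way.

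There is essentially no obstacle here --- the statement is a corollary in the strict sense, chaining two already-proven results. The only points worth a remark are bookkeeping. First, Assumption \ref{assump:scaling} is what powers the central limit theorem (Lemma 2) underlying the Proposition, while Assumption \ref{assump:homoskedastic} is what the Section \ref{sec:homoskedastic} corollary required; both are in force here, so no hypothesis is missing. Second, the notation $\widehat{V}_2'(\A^m;\Y)$ in the statement should be read as $\widehat{V}_2'(\Y)$, since by construction $\widehat{V}_2'$ depends only on the truncated degrees $d_i'$ and not on any choice of compatible adjacency matrix, so the identification is harmless.
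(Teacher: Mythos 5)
Your proposal is correct and follows exactly the paper's own route: the paper likewise proves this corollary by combining the conservativeness result for $\widehat{V}_2(\A^m;\Y)$ and $\widehat{V}_2'(\Y)$ (the corollary following Lemma \ref{lem:V2}) with the proposition on Wald-type intervals built from asymptotically conservative variance estimators. Your remark that $\widehat{V}_2'(\A^m;\Y)$ should be read as $\widehat{V}_2'(\Y)$ is also consistent with the paper's definition of that estimator.
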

Proofs for Corollaries 3 and 4 follow directly from Corollaries 1 and 2 and Proposition 4.


\section{Application: HIV prevalence in a network study}

\label{sec:app}

The ``Sexual Acquisition and Transmission of HIV-Cooperative Agreement Program'' (SATH-CAP) surveyed $n=1022$ injection drug users, men who have sex with men, and their sexual partners in St. Petersburg, Russia from 2005 to 2008 \citep{iguchi2009simultaneous,niccolai2010high}. Subjects were recruited using a social network link-tracing procedure known as ``respondent-driven sampling'' (RDS) \citep{Heckathorn1997Respondent,Broadhead1998Harnessing}. Participants in an RDS study recruit other eligible subjects to whom they are connected within the target population social network. To preserve privacy, subjects do not report identifying information about their network alters; instead they report their \emph{degree} in the target population network. Researchers observe the social links along which recruitment takes place, and the degrees of recruited individuals. Each subject in the SATH-CAP study completed a demographic and behavioral quenstionnaire and also received a rapid HIV test. 

We treat the underlying social network as a dependency graph, denoted $\G=(\V,\mathcal{E})$ representing possible probabilistic dependencies between surveyed subjects' HIV status. Let the subgraph of recruitments be $\G_R=(\V_S,\mathcal{E}_R)$, a subgraph of $\G$; since only recruitment links in $\G$ were observed, the study design did not reveal the induced subgraph $\G_S$. For each subject $i\in\V_S$, we observe their reported total degree $d_i$ and their binary HIV status $X_i$. Let the vector of subjects' HIV status be $\X=(X_1,\ldots,X_n)$, and let the vector of their degrees be $\bd=(d_1,\ldots,d_n)$.  The study reveals $\Y=(\X,\bd,\G_R)$, as described in Assumption \ref{assump:obs}. 

The estimated HIV prevalence in the SATH-CAP study is $\hat{\mu}=\overline{X}=0.328$. Table \ref{tab:results} shows variance estimates and Wald-type 95\% asymptotic confidence intervals computed using the variance estimators described in this paper. The first column shows the na\"ive standard error estimate with corresponding confidence interval below. The second column gives results for the general case in which no assumptions are made about the variance of each $X_i$ (Section \ref{sec:general}). The third column gives results for the homoskedastic case in which $\var(X_i)$ is assumed to be equal to $\var(X_j)$ for $i\neq j$ (Section \ref{sec:homoskedastic}).  

The na\"ive confidence interval is the narrowest, and is equivalent to the case where the adjacency matrix $\A$ is diagonal.  Confidence intervals computed using the na\"ive estimator may dramatically understate the uncertainty in estimates of $\mu$, as the estimator ignores the possibility of dependence between units. Confidence intervals computed using  estimates $\widehat{V}_1$ in the general case are narrower than estimators $\widehat{V}_2$ computed under the homoskedasticity assumption.  The widest intervals are obtained from the bounds given by $\widehat{V}_2(\A^m)$ and $\widehat{V}_2'(\Y)$.  From Lemma \ref{lem:V2}, we see that $\widehat{V}_2(\A^m;\Y) = \widehat{V}_2'(\Y)$ because $\bd'=(d_1',\ldots,d_n')$ is the degree sequence of a compatible adjacency matrix in $\mathcal{A}$.

\begin{table}
\centering
\caption{Standard error estimates and 95\% asymptotic Wald-type confidence intervals for the population HIV prevalence $\mu$. }
\begin{tabular}{rccrccrc}
  \multicolumn{2}{c}{Na\"ive}  && \multicolumn{2}{c}{General} && \multicolumn{2}{c}{Homoskedastic} \\ 
  $\sqrt{\hat{\sigma}^2/n}$ & 0.0147      && $\sqrt{\widehat{V}_1(\A^m;\Y)}$ & 0.0563       && $\sqrt{\widehat{V}_2(\A^m;\Y)}$ & 0.0602 \\ 
        95\% CI:            & (0.299, 0.357) &&            95\%  CI:              & (0.217, 0.438) &&             95\% CI:             & (0.210, 0.446)  \\[0.5em]
                    &               &&                          &               && $\sqrt{\widehat{V}_2'(\Y)}$ & 0.0602 \\ 
                    &               &&                          &               &&        95\% CI:                  & (0.210, 0.446)
\end{tabular}
\label{tab:results}
\end{table}


\section{Discussion}

We have developed conservative estimators for the variance of the sample mean under partial observation of a dependency graph and assumptions about the variance of individual outcomes.  The variance estimation setting we address here is quite flexible, and can accommodate a wide variety of dependency and observation assumptions. For example, Assumption \ref{assump:obs}, which states that we observe $\Y=(\X,\bd,\G_R)$, can be weakened when $\G_R$ is completely unknown. In this case the constraint in the integer linear programs \eqref{eq:ilp1} and \eqref{eq:ilp2} becomes $\A \succeq \mathbf{0}$ where $\mathbf{0}$ is the $n\times n$ matrix of all zeros; this constraint is met for all adjacency matrices $\A$, so it becomes superfluous.  Alternatively, we may not have full knowledge of the degrees $\bd=(d_1,\ldots,d_n)$, and instead have only an upper bound $d_i^*$ for each $d_i$, or a global upper bound $d_i\le d^*$ for all $i=1,\ldots,n$. Conservative variance estimation in both of these cases can be achieved (by susbstituting $d_i^*$ or $d^*$ for $d_i$) with no change to the programs \eqref{eq:ilp1} and \eqref{eq:ilp2} or to the asymptotic results given here.  When no information about $\G_R$ or the degrees $\bd$ is available, setting every $d_i=d^*=n-1$ delivers a maximally conservative upper bound.  

We note here four extensions. (i) Upper bounds for the variance estimates can be obtained by solving a relaxed form of the programs \eqref{eq:ilp1} and \eqref{eq:ilp2}. By Proposition 3, using such upper bounds as a basis for conservative inference will also yield valid confidence intervals. In practice, the results obtained by modern optimization solvers will be tighter with a provably small optimality gap and thus will typically be preferable. (ii) It is possible to extend our results to obtain confidence intervals more generally for asymptotically linear estimators \citep[including regression estimators, e.g.,][]{cameron2015practitioner} using an empirical analogue of the variance of the influence function as the objective function. (iii) Our results facilitate conservative inference for causal estimands under interference between units \citep[e.g.,][]{tchetgen2010causal,liu2014large}, given interference that can be characterized by a constrained dependency graph. (iv) Given additional assumptions about the manner in which the units in the sample are drawn from a broader population, our results could be extended to facilitate confidence intervals for the mean of this broader population.


\section*{Acknowledgement}

Forrest W. Crawford was supported by NIH/NCATS grant KL2 TR000140 and NIMH grant P30MH062294.  
Jos\'{e} R. Zubizarreta acknowledges support from a grant from the Alfred P. Sloan Foundation.
We are grateful to Robert Heimer for helpful comments and for providing the SATH-CAP data, funded by NIH/NIDA grant U01DA017387. 
We also thank 
Daniel Bienstock,
Winston Lin,
Luke W. Miratrix,
Molly Offer-Westort,
Lilla Orr,
Cyrus Samii,
and
Jiacheng Wu
for valuable comments. We express special thanks to Sahand Negahban for important early discussions regarding the formulation of the problem.


\section*{Supplementary Material}


\appendix

\section{Formulation of the integer linear programs}

In order to solve the program (\ref{eq:ilp1}), let $\hat{v}_{ij}$ be the $ij$th element of the sample covariance matrix with $i = 1, ..., n$ and $j = 1, ..., n$.  
Since the sample covariance matrix is symmetric, we can focus on its upper triangular part and use the decision variable $a_{ij} = 1$ if $\hat{v}_{ij} \neq 0$, and 0 otherwise, for each $i < j$.
Based on these decision variables, the integer linear program (\ref{eq:ilp1}) can be written as 
\begin{equation*}
\begin{aligned}
& \underset{\boldsymbol{a}}{\text{maximize}}
&& \sum_{i = 1}^{n} \sum_{j = i+1}^{n} \hat{v}_{ij} a_{ij} \\
& \text{subject to}
&& \sum_{j = 1}^{i-1} a_{ji} + \sum_{j = i+1}^{n} a_{ij} \leq d_i, \; i = 1, ..., n, \\
&
&& a_{ij} \in \{ 0, 1\}, \; i = 1, ..., n, \; j = 1, ..., n, \; i < j, \\
\end{aligned}
\end{equation*}
where $d_i$ is the degree, and further simplified with the constraints $\A \succeq \A_R$ that make some of the decisions variables $a_{ij}$ automatically equal to one.
In order to solve the program (\ref{eq:ilp2}), let $\hat{v}_{ij} = 1$ for every $i = 1, ..., n$ and $j = 1, ..., n$.
These is are examples of the multidimensional knapsack problem studied in operations research (for a survey of this problem, see chapter 9 of \citealt{kellerer2004knapsack}).


\section{Statistical software implementation}

We implement this approach in the new statistical software package \texttt{depinf} for \texttt{R}.
\texttt{depinf} includes two basic functions: \texttt{depgraph}, for finding the adjacency matrix that maximizes the variance estimate of the mean given general constraints on the degree of dependence of the observations (these are problems (\ref{eq:ilp1}) and (\ref{eq:ilp2}) above), and \texttt{depvar} for calculating the variance estimates (\ref{eq:v1}) and (\ref{eq:v2}).
In both \texttt{depgraph} and \texttt{depinf}, we give the option to find an exact solution to (\ref{eq:ilp1}) and (\ref{eq:ilp2}) via integer programming, or an approximate solution to the relaxations of (\ref{eq:ilp1}) and (\ref{eq:ilp2}) via linear programming. 
Naturally, the running time of the approximate solution is lower, but it provides a more conservative variance estimate.
In order to solve (\ref{eq:ilp1}) and (\ref{eq:ilp2}), \texttt{depgraph} can use three different optimization solvers: CPLEX, GLPK and Gurobi. 
By default, \texttt{depgraph} uses GLPK, which can be downloaded from the \texttt{R} repository CRAN. 
To solve large instances of the problem exactly, we strongly recommend using either CPLEX or Gurobi, which are much faster but require a license and special installation. 
Between CPLEX or Gurobi, Gurobi is considerably easier to install. 
At the present \texttt{depinf} can be downloaded from \url{http://www.columbia.edu/~jz2313/} and will soon be available through CRAN.

\bibliography{depgraph}
\bibliographystyle{spbasic}

\end{document}